\tikzset{external/only named=true}
\pgfplotsset{compat=newest}
\newcommand{\abssec}[1]{\noindent\small {\bfseries #1\quad}\ignorespaces}
\renewenvironment{abstract}{\abssec{Abstract}}{\par\vspace{1em}}
\renewcommand{\vec}[1]{{\boldsymbol{#1}}}
\newcommand{\Mesh}{{\mathcal{T}_h}}
\newcommand{\Facets}[1]{{\mathcal{F}(#1)}}
\newcommand{\RTIk}[1]{{I_{#1}^{\operatorname{RT}}}}
\newcommand{\BDMIk}[1]{{I_{#1}^{\operatorname{BDM}}}}
\newcommand{\Hdivnull}{{\vec{H}_0(\operatorname{div},\Omega)}}
\newcommand{\BRI}{{I_h^{\text{BR}}}}
\let\blx@rerun@biber\relax
\definecolor{unianthrazit}{rgb}{0.416,0.408,0.435}
\definecolor{uniorange}{rgb}{0.929,0.431,0.0}
\definecolor{adjyellow}{rgb}{0.929,0.651,0.0}
\definecolor{adjred}{rgb}{0.898,0.0,0.086}
\definecolor{compblue}{rgb}{0.004,0.537,0.561}
\pgfplotsset{colormap={unibw}{color(0cm)=(compblue);color(1cm)=(adjyellow);color(2cm)=(uniorange);color(3cm)=(adjred)}, colormap name=unibw}
\pgfplotsset{CRuniform/.style={compblue, thick, mark=+}}
\pgfplotsset{CRanisotropic/.style={uniorange, thick, mark=+, mark options={rotate=45}}}
\pgfplotsset{CRRTuniform/.style={compblue, thick, mark=triangle}}
\pgfplotsset{CRRTanisotropic/.style={uniorange, thick, mark=triangle, mark options={rotate=60}}}
\pgfplotsset{CRBDMuniform/.style={compblue, thick, mark=square}}
\pgfplotsset{CRBDManisotropic/.style={uniorange, thick, mark=square, mark options={rotate=45}}}
\pgfplotsset{ConvergenceOrder/.style={unianthrazit, thick, dashed}}
\pgfplotsset{ConvergenceOrderReduced/.style={unianthrazit, thick, dotted}}
\theoremstyle{plain}
\newtheorem{theorem}{Theorem}
\newtheorem{lemma}[theorem]{Lemma}
\newtheorem{corollary}[theorem]{Corollary}
\theoremstyle{definition}
\title{Pressure-robust and conforming discretization of the Stokes equations on anisotropic meshes}
\author{Volker Kempf}
\date{\today}
\begin{document}	
	\maketitle
	
	\begin{abstract}
		Pressure-robust discretizations for incompressible flows have been in the focus of research for the past years.
		Many publications construct exactly divergence-free methods or use a reconstruction approach \cite{Linke2014} for existing methods like the Crouzeix--Raviart element in order to achieve pressure-robustness.
		To the best of our knowledge, except for our recent publications \cite{ApelKempfLinkeMerdon2021,ApelKempf2021}, all those articles impose a condition on the shape-regularity of the mesh, and the two mentioned papers that allow for anisotropic elements use a non-conforming velocity approximation.
		Based on the classical Bernardi--Raugel element we provide a conforming pressure-robust discretization using the reconstruction approach on anisotropic meshes.
		Numerical examples support the theory.
	\end{abstract}
	
	\section{Introduction}
		During the last years, pressure-robustness has emerged as an important property that discretizations for incompressible flow problems should possess.
		For the Stokes problem in a domain $\Omega$ that for a data function $\vec{f}\in \vec{L}^2(\Omega)$ and viscosity $\nu>0$ is given by
		\begin{subequations}\label{eq:Stokes}
			\begin{align}
				&&-\nu \Delta \vec{u} + \nabla p &= \vec{f} &&\text{in } \Omega,&&\\
				&&\nabla \cdot \vec{u} &= 0 &&\text{in } \Omega,&&
			\end{align}
		\end{subequations}
		a pressure-robust method yields velocity error estimates of the form, see \cite{Linke2014},
		\begin{equation*}
			\norm{\vec{u}-\vec{u}_h}_{1,h} \lesssim \inf_{\vec{v}_h \in \vec{X}_h} \norm{\vec{u}-\vec{v}_h}_{1,h} + h^m \abs{\vec{u}}_{m+1},
		\end{equation*}
		where $\vec{X}_h$ is the discrete velocity space and $\norm{\vec{v}}_{1,h}^2 = \sum_{T\in\Mesh} \norm{\nabla\vec{v}}^2_{0,T}$.
		Missing pressure-robustness on the other hand, e.g., in the case of the classical family of Taylor--Hood elements, leads to error estimates of the type
		\begin{equation*}
			\norm{\vec{u}-\vec{u}_h}_{1} \lesssim \inf_{\vec{v}_h \in \vec{X}_h} \norm{\vec{u}-\vec{v}_h}_1 + \frac{1}{\nu} \inf_{q_h \in Q_h} \norm{p-q_h}_0,
		\end{equation*}
		where $Q_h$ is the discrete pressure space.
		Both estimates contain the best-approximation error for the velocity in the discrete velocity space, however the advantage of the first estimate is obvious and leads to the descriptive name \emph{pressure-robust}:
		the velocity error does not depend on the pressure approximability and the viscosity of the fluid.
		
		Due to intensive research, many pressure-robust methods are known, e.g., the Scott--Vogelius element \cite{ScottVogelius1985}, $\vec{H}(\mathrm{div})$-conforming discontinuous Galerkin methods \cite{CockburnKanschatSchotzau2007,LehrenfeldSchoberl2016} or classical methods using a reconstruction approach to gain pressure-robustness \cite{Linke2014}.
		The proofs for all of these methods however rely on the assumption of shape-regularity on the mesh elements, which excludes anisotropically graded meshes for boundary layers or edge singularities, which may occur in flow problems.
		This shortcoming was treated in our publications \cite{ApelKempfLinkeMerdon2021,ApelKempf2021}, where the pressure-robust variant of the Crouzeix--Raviart method was used and we could show error estimates for anisotropic meshes in the boundary layer and edge singularity settings.
		
		Since the velocity approximation of the Crouzeix--Raviart method is non-conforming, the aim of this contribution is to present a pressure-robust and conforming method which can be used for meshes that contain anisotropic elements.
		The presented theory of this paper is contained in \cite{Kempf2022:PhD} in a more abstract setting.
	
	\section{Reconstruction approach for pressure-robustness}
		In order to achieve pressure-robustness, we employ the reconstruction approach introduced in \cite{Linke2014}.
		Consider problem \eqref{eq:Stokes} on a domain $\Omega\subset\mathbb{R}^2$ with viscosity parameter $\nu > 0$ and homogeneous Dirichlet boundary conditions. The weak form of this problem is well known: Find $(\vec{u},p)\in\vec{X}\times Q = \vec{H}_0^1(\Omega)\times L_0^2(\Omega)$ so that
		\begin{align}\label{eq:Stokesweak}
			&\nu (\nabla \vec{u}, \nabla\vec{v}) - (\nabla\cdot\vec{v},p) - (\nabla\cdot\vec{u},q) = (\vec{f},\vec{v}) &&\forall (\vec{v}, q) \in \vec{X}\times Q.
		\end{align}
		Since we later require that for the solution $(\vec{u},p)\in\vec{H}^2(\Omega)\times H^1(\Omega)$ holds, we assume that $\Omega$ is a convex polygon where this required regularity is guaranteed \cite{KelloggOsborn1976}.
		
		By using the Helmholtz--Hodge decomposition of the data $\vec{f}=\mathbb{P} \vec{f}+\nabla\phi$ into a divergence-free part $\mathbb{P}\vec{f}$ and an irrotational part $\nabla\phi$, and looking at the problem in the subspace of divergence free functions $\vec{X}^0=\{\vec{v}\in\vec{X}: (\nabla\cdot\vec{v},q)=0\ \forall q\in Q\}$
		\begin{equation}\label{eq:stokes_elliptic}
			\text{Find}\quad \vec{u}\in\vec{X}^0 \quad\text{so that}\quad \nu(\nabla \vec{u},\nabla\vec{v}) = (\vec{f},\vec{v}) = (\mathbb{P}\vec{f},\vec{v}) \quad\forall \vec{v}\in\vec{X}^0,
		\end{equation}
		we see that the velocity solution is independent of the gradient part $\nabla \phi$ of the data, see \cite{Linke2014}, as the test functions from $\vec{X}^0$ are $\vec{L}^2$-orthogonal on gradients.
		We aim to preserve this property in the discrete setting by using a reconstruction operator $I_h$, see \cite{Linke2014}, on the velocity test functions on the right hand side of the problem, so that the discrete version of \eqref{eq:Stokesweak} is given by
		\begin{align}\label{eq:Stokesdiscrete}
			&\nu a_h(\vec{u}_h,\vec{v}_h) + b_h(\vec{v}_h,p_h) + b_h(\vec{u}_h,q_h) = (\vec{f},I_h \vec{v}_h) &&\forall (\vec{v}_h, q_h) \in \vec{X}_h\times Q_h,
		\end{align}
		where $a_h(\vec{u}_h,\vec{v}_h)=(\nabla \vec{u}_h, \nabla\vec{v}_h)$ and $b_h(\vec{v}_h,p_h) = -(\nabla\cdot\vec{v}_h,p_h)$.
		Similar to \eqref{eq:stokes_elliptic} we can write this problem in the subspace of discretely divergence-free functions $\vec{X}_h^0=\{\vec{v}_h\in\vec{X}_h:b_h(\vec{v_h},q_h)=0\ \forall q_h\in Q_h\}$:
		\begin{equation}\label{eq:stokesdiscrete_elliptic}
			\text{Find}\quad \vec{u}_h\in\vec{X}_h^0 \quad\text{so that}\quad \nu a_h(\vec{u}_h,\vec{v}_h) = (\vec{f},I_h\vec{v}_h) \quad\forall \vec{v}_h\in\vec{X}_h^0.
		\end{equation}
	
		The reconstruction operator $I_h:\vec{X}_h\to \vec{H}_0(\mathrm{div},\Omega)=\{\vec{v}\in\vec{H}(\mathrm{div},\Omega): \vec{v}\cdot\vec{n}_{\partial\Omega} = 0\} $ needs to satisfy the properties
		\begin{subequations}\label{eq:as:reconstruction_operator}
			\begin{align}
				\nabla\cdot(I_h\vec{v}_h) &= \nabla \cdot \vec{v}_h &&\forall \vec{v}_h\in\vec{X}_h^0,\label{eq:as:reconstruction_operator_divergence}\\
				\norm{\vec{v}_h-I_h \vec{v}_h}_0 &\lesssim h \norm{\vec{v}_h}_{1,h} &&\forall \vec{v}_h\in\vec{X}_h. \label{eq:as:reconstruction_operator_interpolation}
			\end{align}
		\end{subequations}
		This way, the right hand side of \eqref{eq:Stokesdiscrete}, when tested with $\vec{v}_h\in\vec{X}_h^0$, satisfies
		\begin{equation*}
			(\vec{f},I_h\vec{v}_h) = (\mathbb{P}\vec{f},I_h\vec{v}_h) + (\nabla\phi,I_h\vec{v}_h) = (\mathbb{P}\vec{f},I_h\vec{v}_h).
		\end{equation*}
	
	\section{Modified Bernardi--Raugel discretization and error estimates}
		For the Bernardi--Raugel method, the velocity and pressure approximation spaces are defined by, see \cite{BernardiRaugel1985},
		\begin{align*}
			&\vec{X}_h = (\vec{P}_1(\mathcal{T}_h)\oplus \mathrm{span}\{\lambda_F^1\lambda_F^2\vec{n}_F\ \forall F\in\mathcal{F}_h\})\cap\vec{X}, \\
			&Q_h = \{q_h\in L^2(\Omega): q_h|_T \in P_0(T)\ \forall T\in\mathcal{T}_h\},
		\end{align*}
		where $\mathcal{T}_h$ is the set of mesh elements, $\mathcal{F}_h$ is the set of mesh edges, $\vec{n}_F$ the unit normal on facet $F$, and $\lambda_F^i$ the linear nodal basis functions associated with the endpoints of facet $F$.
		Thus, the velocity space is the space of continuous piecewise linear functions enriched by normal-weighted quadratic facet bubble functions and the pressure space is the space of piecewise constants.
		
		With $I_h=\operatorname{id}$ we get the standard Bernardi--Raugel method (BR), while for the pressure-robust modification we can choose $I_h$ as the lowest-order Raviart--Thomas (BR-RT) or Brezzi--Douglas--Marini (BR-BDM) interpolation operators, see \cite{LinkeMerdon2016}, which we write as $\RTIk{0}$ and $\BDMIk{1}$, respectively.
		
		\begin{lemma}\label{lem:reconstruction_operator}
			Let $\vec{X}_h$ and $Q_h$ be the Bernardi--Raugel finite element pair and let the reconstruction operator $I_h$ be defined by either $(I_h\vec{v}_h)|_T = \BDMIk{1}\vec{v}_h|_T$ or $(I_h\vec{v}_h)|_T = \RTIk{0}\vec{v}_h|_T$ for all $\vec{v}_h\in\vec{X}_h$ and $T\in\Mesh$.
			Then $I_h$ satisfies \eqref{eq:as:reconstruction_operator} independently of the mesh aspect ratio.
		\end{lemma}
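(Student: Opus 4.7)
My plan is to verify the two conditions in \eqref{eq:as:reconstruction_operator} separately by reducing them to well-known properties of the element-wise Raviart--Thomas and Brezzi--Douglas--Marini interpolants. The workhorse for \eqref{eq:as:reconstruction_operator_divergence} is the standard commuting diagram: on every $T\in\Mesh$ both $\RTIk{0}$ and $\BDMIk{1}$ preserve the zeroth flux moment $\int_F \vec{v}_h\cdot\vec{n}_F$ on each edge $F\subset\partial T$, so by the divergence theorem $\nabla\cdot(I_h\vec{v}_h)|_T$ is a constant equal to $|T|^{-1}\int_T \nabla\cdot\vec{v}_h$, i.e.\ to the $L^2$-projection of $\nabla\cdot\vec{v}_h$ onto $Q_h=P_0(\Mesh)$. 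For $\vec{v}_h\in\vec{X}_h^0$ this projection vanishes, which gives \eqref{eq:as:reconstruction_operator_divergence}. The continuity of $\vec{v}_h$ makes the normal flux single-valued across interior facets, hence $I_h\vec{v}_h\in \vec{H}(\operatorname{div},\Omega)$, and the homogeneous Dirichlet condition forces $I_h\vec{v}_h\cdot\vec{n}=0$ on $\partial\Omega$, so that $I_h$ maps into $\Hdivnull$ as required.

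For the interpolation bound \eqref{eq:as:reconstruction_operator_interpolation} I would decompose $\vec{v}_h=\vec{v}_h^{\mathrm{lin}}+\sum_F \alpha_F\, \lambda_F^1\lambda_F^2\vec{n}_F$ into its continuous $\vec{P}_1$ part and the facet bubbles $b_F:=\lambda_F^1\lambda_F^2\vec{n}_F$. For $\BDMIk{1}$ the linear part is reproduced exactly, since $\vec{P}_1(T)$ is precisely the local BDM$_1$ space, and only the bubbles contribute to the error. A short direct calculation on a reference triangle shows that $b_F$ vanishes identically on $\partial T\setminus F$, and on $F$ its normal trace $\lambda_F^1\lambda_F^2$ is orthogonal to $\lambda_F^1-\lambda_F^2$, so all first facet moments vanish; consequently $\BDMIk{1}b_F$ coincides with a scalar multiple of the lowest-order Raviart--Thomas basis function $\vec\psi_F$ associated with $F$, namely $(|F|/6)\vec\psi_F$. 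The same element-wise calculation holds for $\RTIk{0}$; the additional contribution from the linear part in the RT case is then controlled by an anisotropic Raviart--Thomas interpolation estimate of Apel/Acosta--Dur\'an type, which is valid on meshes satisfying the maximum-angle condition.

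The technically central step is to establish
\[
  \|b_F-(|F|/6)\vec\psi_F\|_{0,T}\;\lesssim\; h_T\,\|\nabla b_F\|_{0,T}
\]
with a constant \emph{independent} of the aspect ratio of $T$. My approach would be to transport the estimate to a reference anisotropic triangle (with one short and one long direction) and to exploit that $|F|$, $\|\vec\psi_F\|_{0,T}$ and $\|\nabla b_F\|_{0,T}$ all pick up the same power of the dominating edge length, so that the error has a built-in cancellation. Summing the resulting local estimate over all edges of $T$ and all elements, together with the BDM-exactness (respectively the anisotropic RT-estimate) on the linear part, produces \eqref{eq:as:reconstruction_operator_interpolation}.

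The main obstacle I expect is precisely this aspect-ratio bookkeeping: the individual norms $\|b_F\|_{0,T}$, $\|\vec\psi_F\|_{0,T}$ and $\|\nabla b_F\|_{0,T}$ all degenerate when the triangle flattens, and the naive shape-regular RT estimate would indeed produce a bound that grows with the aspect ratio; only the explicit form $b_F-(|F|/6)\vec\psi_F$, together with the observation that the bubble $b_F$ is proportional to $\vec{n}_F$ while $\vec\psi_F-\mathrm{const}$ lies in the same direction modulo lower-order terms, makes the cancellation visible and yields the robust constant.
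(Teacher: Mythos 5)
Your argument for \eqref{eq:as:reconstruction_operator_divergence} is correct and is in substance the paper's own: both reduce the claim to the preservation of the zeroth-order normal-flux moments on each edge (the paper integrates by parts against $q_h\in Q_h$ and uses $\nabla q_h=\vec{0}$; you apply the divergence theorem directly), and both conclude $\nabla\cdot I_h\vec{v}_h=0$ for $\vec{v}_h\in\vec{X}_h^0$ rather than a pointwise identity, which is exactly what is needed later. Your justification that $I_h$ maps into $\Hdivnull$ also matches the paper's.

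For \eqref{eq:as:reconstruction_operator_interpolation} you take a genuinely different route. The paper disposes of this estimate in one line by summing the elementwise anisotropic interpolation error bounds for $\RTIk{0}$ and $\BDMIk{1}$ from the cited references, whereas you exploit the special structure of Bernardi--Raugel functions: $\BDMIk{1}$ reproduces the $\vec{P}_1$ part exactly and sends the facet bubble $b_F=\lambda_F^1\lambda_F^2\vec{n}_F$ to $(|F|/6)\vec{\psi}_F$ (your moment computation is right: $\int_F\lambda_F^1\lambda_F^2\dd{s}=|F|/6$ and the first moment vanishes by symmetry). This is more elementary and yields an explicit formula for the error, which the paper's citation-based proof does not. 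Two caveats, though. First, your \emph{technically central step} $\norm{b_F-(|F|/6)\vec{\psi}_F}_{0,T}\lesssim h_T\norm{\nabla b_F}_{0,T}$ is asserted rather than proved, and the mechanism is not the cancellation you describe: on the long edge of a thin triangle the component of $(|F|/6)\vec{\psi}_F$ tangential to $F$ dominates $b_F$ outright and nothing cancels; the bound holds because $\norm{\nabla b_F}_{0,T}$ blows up at the matching rate $(h_T/\varrho_T)^{1/2}$ (with $\varrho_T$ the small height) through the derivative of the bubble across the thin direction. The scaling bookkeeping does close, but it has to be carried out edge by edge, and it still presupposes a maximum angle condition --- as do the estimates the paper cites; ``independent of the aspect ratio'' does not mean ``for arbitrary triangles.'' Second, $\RTIk{0}$ does not reproduce $\vec{P}_1$, so for the BR-RT variant you must fall back on exactly the anisotropic Raviart--Thomas estimates the paper invokes; your self-contained argument is genuinely independent of the literature only in the BDM case.
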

		\begin{proof}
			Since $\vec{X}_h\subset \vec{C}(\overline{\Omega})\cap \vec{X}$, the operator $I_h$ maps to a subspace of $\Hdivnull$.
			Estimate \eqref{eq:as:reconstruction_operator_interpolation} is proved by summing the elementwise error estimates for the Raviart--Thomas and Brezzi--Douglas--Marini interpolation operators from \cite{AcostaApelDuranLombardi2011} and \cite{ApelKempf2020}, respectively.
			
			To show \eqref{eq:as:reconstruction_operator_divergence} we prove that the reconstruction operator preserves the discrete divergence of functions from $\vec{X}_h$, i.e.,
			\begin{equation*}
				\int_T \nabla\cdot I_h\vec{v}_h q_h\dd{\vec{x}} = \int_T \nabla\cdot\vec{v}_h q_h\dd{\vec{x}}\qquad\forall q_h \in Q_h
			\end{equation*}
			holds for all $\vec{v}_h\in\vec{X}_h$ and all $T\in\Mesh$.
			Integrating by parts we get
			\begin{equation*}
				\int_T \nabla\cdot(I_h\vec{v}_h - \vec{v}_h) q_h\dd{\vec{x}} = \int_T (\vec{v}_h - I_h\vec{v}_h)\cdot \nabla q_h\dd{\vec{x}} + \sum_{F\in\Facets{T}}\int_F (I_h\vec{v}_h - \vec{v}_h)\cdot\vec{n}_F q_h\dd{\vec{s}},
			\end{equation*}
			where $\mathcal{F}(T)$ is the set of facets of the element $T$.
			Since $q_h$ is piecewise constant it holds $\nabla q_h = \vec{0}$ and by using the definition of the operators $\BDMIk{1}$ and $\RTIk{0}$ we see that the right hand side vanishes.
		\end{proof}
	
		\begin{lemma}
			There is an operator $I_F:\vec{X}\to\vec{X}_h$ that for all $\vec{v}\in\vec{X}$ satisfies the properties
			\begin{align*}
				b_h(\vec{v},q_h) &= b_h(I_h^F\vec{v},q_h) \qquad \forall q_h\in Q_h, \\
				\norm{I_h^F\vec{v}}_{1,h} &\leq C_F \norm{\vec{v}}_{1,h},
			\end{align*}
			with a stability constant $C_F$ that is independent of the aspect ratio of the mesh and the mesh size parameter $h$.
		\end{lemma}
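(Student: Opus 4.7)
The plan is to define $I_h^F\vec v$ as the sum of an anisotropically stable quasi-interpolant into the continuous piecewise linears and a facet-bubble correction that restores the mean normal flux on every facet. Take a Scott--Zhang-type operator $J_h:\vec X\to\vec P_1(\Mesh)\cap\vec X$ that preserves homogeneous Dirichlet data and satisfies the anisotropic bounds $\norm{\nabla J_h\vec v}_{0,T}\lesssim\norm{\nabla\vec v}_{0,\omega_T}$ and $\norm{\vec v-J_h\vec v}_{0,T}\lesssim h_{\min,T}\norm{\nabla\vec v}_{0,\omega_T}$, where $\omega_T$ denotes the usual vertex patch and $h_{\min,T}$ is the smallest height of $T$; such operators are available on meshes with anisotropic elements, cf.\ the references cited in \Cref{lem:reconstruction_operator}. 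For every facet $F\in\mathcal F_h$ set
\[
	\alpha_F=\frac{\int_F(\vec v-J_h\vec v)\cdot\vec n_F\dd{\vec s}}{\int_F\lambda_F^1\lambda_F^2\dd{\vec s}},\qquad I_h^F\vec v=J_h\vec v+\sum_{F\in\mathcal F_h}\alpha_F\,\lambda_F^1\lambda_F^2\vec n_F.
\]
On a boundary facet both $\vec v\cdot\vec n_F$ and $J_h\vec v\cdot\vec n_F$ vanish, hence $\alpha_F=0$, and conformity $I_h^F\vec v\in\vec X_h$ is preserved.

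The flux-preservation identity $b_h(\vec v-I_h^F\vec v,q_h)=0$ is the short step. Because $q_h\in Q_h$ is piecewise constant, elementwise integration by parts reduces the bilinear form to a signed sum of facet integrals $\int_F(\vec v-I_h^F\vec v)\cdot\vec n_F\dd{\vec s}$. Each facet bubble $\lambda_F^1\lambda_F^2$ vanishes on the remaining two facets of each of its neighbouring elements, so $\alpha_F\lambda_F^1\lambda_F^2\vec n_F$ modifies only the mean flux through $F$, and the choice of $\alpha_F$ makes that flux equal to $\int_F\vec v\cdot\vec n_F\dd{\vec s}$.

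The stability bound is the heart of the argument. For each $T\in\Mesh$ and each $F\in\Facets{T}$, I estimate $|\alpha_F|\,\norm{\nabla(\lambda_F^1\lambda_F^2)}_{0,T}$ by combining the identity $\int_F\lambda_F^1\lambda_F^2\dd{\vec s}=|F|/6$, a Cauchy--Schwarz bound on the facet numerator, an anisotropic trace inequality of the form $\norm{\vec w}_{0,F}^2\lesssim h_{T,F}^{-1}\norm{\vec w}_{0,T}^2+h_{T,F}\norm{\partial_{\vec n_F}\vec w}_{0,T}^2$ applied to $\vec w=\vec v-J_h\vec v$, and a direct barycentric computation of the $H^1$-seminorm of the bubble. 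The anisotropic estimates for $J_h$ absorb the resulting $T$-norms of $\vec v-J_h\vec v$ into $\norm{\nabla\vec v}_{0,\omega_T}$, and summation over all facets together with the finite overlap of the patches $\omega_T$ yields the asserted constant $C_F$ independent of $h$ and of the aspect ratio.

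The main obstacle is precisely to make the anisotropic scales cancel on elements of arbitrary aspect ratio. The $H^1$-seminorm of the facet bubble carries an aspect-ratio factor of order $(h_{\max,T}/h_{\min,T})^{1/2}$, and the trace inequality contributes another factor tied to the height of $T$ opposite $F$; these cancel against the $L^2$-approximation estimate for $J_h$ only if the latter is sharp in the short direction of $T$. Establishing, or importing from the literature, a Scott--Zhang-type interpolation with such a refined directional bound is therefore the technical key of the proof, while the remaining manipulations are routine.
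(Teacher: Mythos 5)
Your construction --- an anisotropically stable quasi-interpolant $J_h$ into the continuous piecewise linears plus facet-bubble corrections $\alpha_F\lambda_F^1\lambda_F^2\vec n_F$ restoring the mean normal flux on every facet --- is the classical way to build a Fortin operator for the Bernardi--Raugel pair, and your first two steps (conformity, and the identity $b_h(\vec v-I_h^F\vec v,q_h)=0$ via elementwise integration by parts against piecewise constants) are correct. Note, however, that the paper does not carry out this construction at all: its proof consists of citing \cite{ApelNicaise2004}, where precisely such an operator is built, but only under explicit structural conditions on the anisotropic mesh; the sentence following the lemma stresses that restriction.

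The gap is in the stability step, and it sits exactly where you place the ``technical key'': the interpolation bound $\norm{\vec v-J_h\vec v}_{0,T}\lesssim h_{\min,T}\norm{\nabla\vec v}_{0,\omega_T}$ that your cancellation requires does not exist. If $\vec v$ depends only on the long coordinate of a thin element (say $T$ has a long edge of length $H$ and height $\varepsilon\ll H$ over it), the $L^2(T)$ error of any quasi-interpolant reproducing $\vec P_1$ is generically of order $H\norm{\nabla\vec v}_{0,\omega_T}$, exactly as in one dimension; the sharp anisotropic statement is only $\norm{\vec v-J_h\vec v}_{0,T}\lesssim\sum_i h_{i,T}\norm{\partial_i\vec v}_{0,\omega_T}$, and even that needs aligned elements. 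Feeding this correct bound into your own chain of estimates for the bubble on the \emph{long} facet $F$ (where $\norm{\nabla(\lambda_F^1\lambda_F^2)}_{0,T}\sim(H/\varepsilon)^{1/2}$, $\int_F\lambda_F^1\lambda_F^2\dd{\vec s}=\abs{F}/6\sim H$, and the trace inequality contributes a factor $\varepsilon^{-1/2}$) gives
\begin{equation*}
	\abs{\alpha_F}\,\norm{\nabla(\lambda_F^1\lambda_F^2)}_{0,T}\;\lesssim\;\frac{H}{\varepsilon}\,\norm{\nabla\vec v}_{0,\omega_T},
\end{equation*}
i.e.\ a loss of a full power of the aspect ratio instead of the claimed cancellation. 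The component of $\vec v$ entering $\alpha_F$ on a long facet is the one normal to it, and its error on $F$ is governed by its variation in the \emph{long} direction, which costs $H$, not $\varepsilon$. This is the genuine obstruction: the inf-sup constant of the Bernardi--Raugel pair is not uniform on arbitrary anisotropic meshes, and the cited reference must exploit the tensor-product/boundary-layer structure of the admissible meshes to control these long-facet fluxes by an argument that is not elementwise Cauchy--Schwarz. As written, your proof establishes the lemma only up to a factor of the aspect ratio.
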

		\begin{proof}
			This is proofed in \cite[Theorem 1]{ApelNicaise2004} for a wide class of anisotropic two-dimensional meshes.
			In particular boundary layer adapted meshes are included in the results from the reference.
		\end{proof}
		The previous lemma provides the inf-sup stability result for the Bernardi--Raugel method in the form
		\begin{equation}\label{eq:discrete_inf_sup}
			\inf_{0\neq q_h\in Q_h} \sup_{\vec{0}\neq\vec{v}_h\in\vec{X}_h}\frac{b_h(\vec{v}_h,q_h)}{\norm{\vec{v}_h}_{1,h} \norm{q_h}_0} \geq \widetilde{\beta} >0,
		\end{equation}
		where $\widetilde{\beta}$ is the discrete inf-sup constant, as the existence of a Fortin operator is equivalent to inf-sup stability, see, e.g., \cite[Lemma 4.19]{ErnGuermond2004}.
		We have to keep in mind that the results from \cite{ApelNicaise2004} that are used in the proof are restricted to a wide class of two-dimensional meshes.
	
		The next result is a consistency estimate in the subspace of divergence-free functions.
		\begin{lemma}\label{lem:consistency}
			Let $(\vec{u}, p)$ be the solution of the Stokes problem with unit viscosity.
			The consistency error estimate
			\begin{align}
				\abs{a_h(\vec{u}, \vec{v}_h) - (\vec{f}, \vec{v}_h)} &\lesssim h \norm{\vec{v}_h}_{1,h} \norm{\vec{f}}_0 \qquad \forall \vec{v}_h\in \vec{X}_h^0\label{eq:consistency2}
			\end{align}
			holds, where the constant is independent of the aspect ratio of the mesh and the mesh size parameter $h$.
		\end{lemma}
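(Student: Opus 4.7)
The strategy is to rewrite the residual $a_h(\vec{u},\vec{v}_h)-(\vec{f},\vec{v}_h)$ as a pure pressure term $(p,\nabla\cdot\vec{v}_h)$ using integration by parts on the continuous equation, then exploit the discretely divergence-free property of $\vec{v}_h$ to subtract the element-mean of $p$, and finally control the resulting local Poincaré-type terms by an aspect-ratio-independent inequality available on convex triangles.

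Concretely, by Kellogg--Osborn regularity the solution satisfies $(\vec{u},p)\in \vec{H}^2(\Omega)\times H^1(\Omega)$ and the strong form $-\Delta\vec{u}+\nabla p=\vec{f}$ holds in $\vec{L}^2(\Omega)$. Testing this with $\vec{v}_h\in\vec{X}_h^0\subset\vec{H}_0^1(\Omega)$ and integrating by parts globally (no facet terms survive because $\vec{v}_h$ is continuous and vanishes on $\partial\Omega$) yields
\begin{equation*}
a_h(\vec{u},\vec{v}_h)-(\vec{f},\vec{v}_h)=(p,\nabla\cdot\vec{v}_h).
\end{equation*}
Since $Q_h$ contains all elementwise constants and $\vec{v}_h\in\vec{H}_0^1(\Omega)$ gives $\int_\Omega\nabla\cdot\vec{v}_h=0$, the discrete divergence-free constraint $b_h(\vec{v}_h,q_h)=0$ for all $q_h\in Q_h$ forces $\int_T\nabla\cdot\vec{v}_h=0$ for each $T\in\Mesh$. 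Denoting by $\bar p_T$ the $L^2$-mean of $p$ on $T$, this yields elementwise
\begin{equation*}
(p,\nabla\cdot\vec{v}_h)=\sum_{T\in\Mesh}(p-\bar p_T,\nabla\cdot\vec{v}_h)_T.
\end{equation*}

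The main obstacle is bounding $\norm{p-\bar p_T}_{0,T}$ on possibly very anisotropic triangles, since the Poincaré constant on a general domain deteriorates with the aspect ratio. The way around this is the Payne--Weinberger inequality: on \emph{any} convex set $T$ one has $\norm{p-\bar p_T}_{0,T}\leq(h_T/\pi)\norm{\nabla p}_{0,T}$, with a constant that depends only on the diameter $h_T$ and not on the shape. Since triangles are convex, this bound applies uniformly to every element of $\Mesh$, regardless of the aspect ratio.

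Combining this inequality with elementwise Cauchy--Schwarz, summing over $T$, using $\norm{\nabla\cdot\vec{v}_h}_0\leq\sqrt{2}\,\norm{\vec{v}_h}_{1,h}$, and finally invoking the Kellogg--Osborn a priori bound $\norm{\nabla p}_0\lesssim\norm{\vec{f}}_0$ on the convex polygon closes the estimate with $h=\max_{T\in\Mesh}h_T$ and a constant that is independent both of $h$ and of the mesh aspect ratio.
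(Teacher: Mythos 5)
Your proof is correct and takes essentially the same route as the paper: the residual reduces to the pressure term $-b_h(\vec{v}_h,p)=\sum_{T\in\Mesh}(p-\bar p_T,\nabla\cdot\vec{v}_h)_T$ after subtracting the elementwise mean (the paper's $L^2$-projection $\pi_h p$), which is then bounded by an elementwise Poincar\'e estimate, Cauchy--Schwarz, and the a priori bound $\norm{p}_1\lesssim\norm{\vec{f}}_0$. Your explicit appeal to the Payne--Weinberger inequality on convex elements makes the aspect-ratio independence of the constant more transparent than the paper's citation of a generic projection error estimate, but the underlying argument is the same.
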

		\begin{proof}
			We write for $\vec{v}_h\in\vec{X}_h^0$
			\begin{align*}
				\abs{a_h(\vec{u},\vec{v}_h) - (\vec{f},\vec{v}_h)} \leq \abs{a_h(\vec{u},\vec{v}_h) + b_h(\vec{v}_h,p) - (\vec{f},\vec{v}_h)} + \abs{b_h(\vec{v}_h,p)},
			\end{align*}
			where the first term vanishes since $\vec{X}_h^0 \subset \vec{X}_h \subset \vec{X}$.
			Estimating the second term, using the $L^2$-projection operator $\pi_h$ onto $Q_h$, we get
			\begin{align*}
				\abs{a_h(\vec{u},\vec{v}_h) - (\vec{f},\vec{v}_h)} &\leq \abs{b_h(\vec{v}_h,p)} = \abs{b_h(\vec{v}_h,\pi_h p) + b_h(\vec{v}_h,p - \pi_h p)} = \abs{b_h(\vec{v}_h,p - \pi_h p)} \\
				&\leq \norm{\nabla_h\cdot\vec{v}_h}_0 \norm{p-\pi_h p}_0 \leq \norm{\vec{v}_h}_{1,h}\norm{p-\pi_h p}_0.
			\end{align*}
			The error of the $L^2$-projection onto the piecewise constant functions can be estimated using \cite[Theorem 1.103]{ErnGuermond2004} which, using the result that the Stokes solution is bounded by the data function, see, e.g., \cite[Theorem 4.3]{ErnGuermond2004}, leads to the final estimate
			\begin{align*}
				\abs{a_h(\vec{u},\vec{v}_h) - (\vec{f},\vec{v}_h)} &\lesssim h \norm{\vec{v}_h}_{1,h}\norm{p}_1 \lesssim h \norm{\vec{v}_h}_{1,h}\norm{\vec{f}}_0. \qedhere
			\end{align*}
		\end{proof}
		
		\begin{lemma}\label{lem:approximation}
			Let $(\vec{u}, p)$ be the solution of the Stokes problem \eqref{eq:Stokesweak}.
			Then for the Bernardi--Raugel element the approximation properties
			\begin{align*}
				\inf_{\vec{v}_h\in\vec{X}_h^0}\norm{\vec{u}-\vec{v}_h}_{1,h} &\lesssim h \norm{\mathbb{P}(\Delta\vec{u})}_0,&	\inf_{q_h\in Q_h}\norm{p-q_h}_0 &\lesssim h \norm{\vec{f}}_0
			\end{align*}
			hold, where the constants are independent of the aspect ratio of the mesh and the mesh size parameter $h$.
		\end{lemma}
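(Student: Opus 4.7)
The proof splits into a pressure estimate and a velocity estimate, and the former is short: I would take $q_h=\pi_h p$, the elementwise $L^2$-projection onto $Q_h$. The bound $\norm{p-\pi_h p}_0\lesssim h\norm{p}_1$ holds on each element via a Poincaré inequality whose constant depends only on the element's diameter (no aspect-ratio penalty, since only first derivatives of $p$ enter), and Stokes regularity on the convex polygon $\Omega$ then gives $\norm{p}_1\lesssim\norm{\vec{f}}_0$. This is exactly the sequence of estimates already employed in the proof of \cref{lem:consistency}.

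For the velocity estimate I would exhibit the canonical Bernardi--Raugel interpolant $\BRI\vec{u}\in\vec{X}_h$ as an admissible element of $\vec{X}_h^0$: its $\vec{P}_1$-nodal part is fixed by the vertex values of $\vec{u}$ and each facet-bubble coefficient is chosen to enforce
\[
\int_F(\BRI\vec{u}-\vec{u})\cdot\vec{n}_F\,\dd\vec{s}=0\qquad \forall F\in\mathcal{F}_h.
\]
Since $q_h|_T$ is constant, integrating by parts on each $T$ and inserting the vanishing facet fluxes together with $\nabla\cdot\vec{u}=0$ gives $b_h(\BRI\vec{u}-\vec{u},q_h)=0$, so $b_h(\BRI\vec{u},q_h)=b_h(\vec{u},q_h)=0$, i.e.\ $\BRI\vec{u}\in\vec{X}_h^0$. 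The bound $\norm{\vec{u}-\BRI\vec{u}}_{1,h}\lesssim h\abs{\vec{u}}_2$ is then obtained by bounding the $\vec{P}_1$-piece through an anisotropic Lagrange estimate under a maximum-angle condition and the normal-weighted bubble piece through the aspect-ratio-independent estimates underlying \cite{ApelNicaise2004}.

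To replace $\abs{\vec{u}}_2$ by the sharper $\norm{\mathbb{P}\Delta\vec{u}}_0$ I would invoke the pressure-robust identity underlying the paper's motivation: writing $\vec{f}=\mathbb{P}\vec{f}+\nabla\phi$, the pair $(\vec{u},p-\phi)$ solves the Stokes system with right-hand side $\mathbb{P}\vec{f}$, so the convex-polygon regularity theorem of \cite{KelloggOsborn1976} yields $\abs{\vec{u}}_2\lesssim\norm{\mathbb{P}\vec{f}}_0$; on the other hand applying $\mathbb{P}$ to the momentum equation gives $\mathbb{P}\vec{f}=-\mathbb{P}\Delta\vec{u}$. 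The step I expect to be the most delicate is the aspect-ratio-independent interpolation bound for the facet-bubble contribution, since the usual shape-regular proofs are not sufficient and one must rely on the maximum-angle-based machinery of \cite{ApelNicaise2004}; once this is in place, the upgrade via the Helmholtz decomposition and Stokes regularity is routine.
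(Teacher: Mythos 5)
Your proposal is correct, and on one step it takes a genuinely different route from the paper. For the passage to the discretely divergence-free subspace, the paper first bounds $\inf_{\vec{v}_h\in\vec{X}_h^0}\norm{\vec{u}-\vec{v}_h}_{1,h}$ by $\inf_{\vec{v}_h\in\vec{X}_h}\norm{\vec{u}-\vec{v}_h}_{1,h}$ using the abstract argument of \cite[II.(1.16)]{GiraultRaviart1986} (which rests on the inf-sup condition \eqref{eq:discrete_inf_sup}), and only then estimates $\norm{\vec{u}-\BRI\vec{u}}_{1,h}$ via the stability bound \eqref{eq:BR_stability}, polynomial preservation and a Bramble--Hilbert argument. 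You instead observe that the canonical Bernardi--Raugel interpolant of the divergence-free $\vec{u}$ already lies in $\vec{X}_h^0$, because its bubble coefficients match the facet fluxes of $\vec{u}$ and $q_h$ is piecewise constant; this is the Fortin property made explicit, and it lets you skip the subspace-approximation lemma entirely. Your route is slightly more constructive and self-contained; the paper's route is more modular in that it reuses the already-established inf-sup machinery and does not need $\vec{u}$ to admit pointwise vertex values (harmless here since $\vec{u}\in\vec{H}^2(\Omega)\hookrightarrow\vec{C}(\overline{\Omega})$ in 2D). The remaining ingredients coincide: the aspect-ratio-independent interpolation bound from \cite{ApelNicaise2004} (this is indeed the delicate step, as you note), the pressure estimate via the $L^2$-projection, and the upgrade from $\abs{\vec{u}}_2$ to $\norm{\mathbb{P}(\Delta\vec{u})}_0$ via the Helmholtz decomposition. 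One bookkeeping remark: for general $\nu$ the correct intermediate statements are $\abs{\vec{u}}_2\lesssim\nu^{-1}\norm{\mathbb{P}\vec{f}}_0$ and $\mathbb{P}\vec{f}=-\nu\,\mathbb{P}(\Delta\vec{u})$; you dropped the factor $\nu$ in both places, and the two slips cancel so your final bound is nevertheless the right one.
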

		\begin{proof}
			We first need the stability estimate for the Bernardi--Raugel interpolation operator from \cite[Section 5.2]{ApelNicaise2004}, where it was shown that for $\vec{v} \in \vec{H}^2(\Omega)$ the estimate
			\begin{equation}\label{eq:BR_stability}
				\norm{\BRI \vec{v}}_{1,h} \lesssim \norm{\vec{v}}_{1,h} + h \abs{\vec{v}}_2
			\end{equation}
			holds on the types of meshes we use.
			With the technique from the proof of \cite[II.(1.16)]{GiraultRaviart1986}, we get
			\begin{equation*}
				\inf_{\vec{v}_h\in\vec{X}_h^0} \norm{\vec{u}-\vec{v}_h}_{1,h} \lesssim \inf_{\vec{v}_h\in\vec{X}_h} \norm{\vec{u}-\vec{v}_h}_{1,h} \lesssim \norm{\vec{u}-\BRI\vec{u}}_{1,h},
			\end{equation*}
			so that now only the error of the Bernardi--Raugel interpolation needs to be estimated.
			Since the operator $\BRI$ preserves linear polynomials we can use the stability estimate \eqref{eq:BR_stability} and a Bramble--Hilbert type argument, which in the end leads to the estimate
			\begin{equation*}
				\inf_{\vec{v}_h\in\vec{X}_h^0} \norm{\vec{u}-\vec{v}_h}_{1,h} \lesssim h \abs{\vec{u}}_2.
			\end{equation*}
			As $\vec{u}\in\vec{H}^2(\Omega)$ is the Stokes velocity solution for data $\vec{f}$, we know, see, e.g., \cite[Lemma 2]{ApelKempf2021}, that it also solves a Stokes system with data $\nu^{-1}\mathbb{P}\vec{f}$.
			We can thus again use that the Stokes solution is bounded by the data function, see, e.g., \cite[Theorem 4.3]{ErnGuermond2004}, and estimate
			\begin{equation*}
				\abs{\vec{u}}_2 \lesssim \nu^{-1}\norm{\mathbb{P}\vec{f}}_0.
			\end{equation*}
			With \cite[Equation (9)]{ApelKempf2021} we now get the desired estimate
			\begin{equation*}
				\inf_{\vec{v}_h\in\vec{X}_h^0} \norm{\vec{u}-\vec{v}_h}_{1,h} \lesssim h \norm{\mathbb{P}(\Delta \vec{u})}_0.
			\end{equation*}
			The estimate for the pressure can be acquired by again using the error estimate for the $L^2$-projection into piecewise constants $\pi_h$ from \cite[Theorem 1.103]{ErnGuermond2004}, with which we can compute
			\begin{equation*}
			\inf_{q_h\in Q_h} \norm{p-q_h}_0 = \norm{p-\pi_h p}_0 \lesssim h \norm{p}_1 \lesssim h \norm{\vec{f}}_0.\qedhere
			\end{equation*} 
		\end{proof}

		With these lemmas as preparation, we are able to prove the discretization error estimates.
		\begin{theorem}\label{th:main_result}
			Let $\Omega \subset \mathbb{R}^2$ be a convex polygon, $\vec{X}_h$, $Q_h$ the Bernardi--Raugel finite element pair and $(\vec{u},p)$, $(\vec{u}_h,p_h)$ the solutions to \eqref{eq:Stokesweak} and \eqref{eq:Stokesdiscrete}.
			Further let the reconstruction operator $I_h$ be defined by either $(I_h\vec{v}_h)|_T = \BDMIk{1}\vec{v}_h|_T$ or $(I_h\vec{v}_h)|_T = \RTIk{0}\vec{v}_h|_T$ for all $\vec{v}_h\in\vec{X}_h$ and $T\in\Mesh$, and let $\Mesh$ satisfy the mesh conditions from \cite{ApelNicaise2004}.
			Then the estimates
			\begin{align*}\label{eq:main_result}
				\norm{\vec{u}-\vec{u}_h}_{1,h} &\lesssim \inf_{\vec{v}_h\in\vec{X}_h^0} \norm{\vec{u}-\vec{v}_h}_{1,h} + h \norm{\mathbb{P}( \Delta \vec{u})}_0,\\
				\norm{p-p_h}_0 &\lesssim \inf_{q_h\in Q_h} \norm{p-q_h}_0 + \frac{\nu}{\tilde{\beta}} \inf_{\vec{v}_h\in\vec{X}_h^0}\norm{\vec{u}-\vec{v}_h}_{1,h} + \frac{h}{\tilde{\beta}} \norm{\vec{f}}_0
			\end{align*}
			hold, where $\tilde{\beta}$ is the discrete inf-sup constant.
		\end{theorem}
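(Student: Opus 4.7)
The plan is to run a Strang-type argument in the discretely divergence-free subspace $\vec{X}_h^0$ for the velocity, and use the inf-sup bound \eqref{eq:discrete_inf_sup} for the pressure, in both cases exploiting the pressure-robust consistency idea underlying the reconstruction approach. All the building blocks—Lemma~\ref{lem:reconstruction_operator}, Lemma~\ref{lem:consistency}, the Fortin operator supplying \eqref{eq:discrete_inf_sup}, and Lemma~\ref{lem:approximation}—have already been established aspect-ratio-robustly, so what remains is to assemble them.

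For the velocity estimate, I would pick an arbitrary $\vec{v}_h\in\vec{X}_h^0$, set $\vec{w}_h=\vec{u}_h-\vec{v}_h\in\vec{X}_h^0$, and use coercivity of $a_h$ together with the discrete problem \eqref{eq:stokesdiscrete_elliptic} to write
\begin{equation*}
\nu\norm{\vec{w}_h}_{1,h}^2 = \nu a_h(\vec{u}-\vec{v}_h,\vec{w}_h) + (\vec{f},I_h\vec{w}_h) - \nu a_h(\vec{u},\vec{w}_h).
\end{equation*}
The first summand is trivially bounded by $\nu\norm{\vec{u}-\vec{v}_h}_{1,h}\norm{\vec{w}_h}_{1,h}$. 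For the consistency-type part I would invoke pressure-robustness: since $\vec{w}_h\in\vec{X}_h^0$, Lemma~\ref{lem:reconstruction_operator} (combined with the fact that $\nabla\cdot I_h\vec{w}_h$ already lies in $Q_h$) gives $\nabla\cdot I_h\vec{w}_h=0$, so that $(\vec{f},I_h\vec{w}_h)=(\mathbb{P}\vec{f},I_h\vec{w}_h)$. Splitting this as $(\mathbb{P}\vec{f},\vec{w}_h)+(\mathbb{P}\vec{f},I_h\vec{w}_h-\vec{w}_h)$, the second piece is controlled by \eqref{eq:as:reconstruction_operator_interpolation}, while the difference $\nu a_h(\vec{u},\vec{w}_h)-(\mathbb{P}\vec{f},\vec{w}_h)$ is precisely the consistency error for the Stokes problem with right-hand side $\nu^{-1}\mathbb{P}\vec{f}$, whose velocity solution is still $\vec{u}$ by \cite[Lemma 2]{ApelKempf2021}. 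Applying Lemma~\ref{lem:consistency} after scaling, and using the identity $\mathbb{P}\vec{f}=-\nu\mathbb{P}(\Delta\vec{u})$ obtained by Helmholtz-projecting the momentum equation, both remaining contributions acquire the bound $h\nu\norm{\mathbb{P}(\Delta\vec{u})}_0\norm{\vec{w}_h}_{1,h}$. Dividing by $\nu\norm{\vec{w}_h}_{1,h}$, applying $\norm{\vec{u}-\vec{u}_h}_{1,h}\leq\norm{\vec{u}-\vec{v}_h}_{1,h}+\norm{\vec{w}_h}_{1,h}$ and taking the infimum over $\vec{v}_h$ delivers the velocity estimate.

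For the pressure, I would apply \eqref{eq:discrete_inf_sup} to $p_h-q_h$ for arbitrary $q_h\in Q_h$. Subtracting the continuous momentum equation from the discrete one and testing against $\vec{v}_h\in\vec{X}_h$ gives
\begin{equation*}
b_h(\vec{v}_h,p_h-q_h) = (\vec{f},I_h\vec{v}_h-\vec{v}_h) + \nu a_h(\vec{u}-\vec{u}_h,\vec{v}_h) + b_h(\vec{v}_h,p-q_h),
\end{equation*}
whose three terms are bounded respectively by $h\norm{\vec{f}}_0\norm{\vec{v}_h}_{1,h}$ via \eqref{eq:as:reconstruction_operator_interpolation}, by $\nu\norm{\vec{u}-\vec{u}_h}_{1,h}\norm{\vec{v}_h}_{1,h}$, and by $\norm{p-q_h}_0\norm{\vec{v}_h}_{1,h}$. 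Dividing through, taking the supremum to expose $\tilde\beta$, inserting the already-proved velocity bound (noting $\nu\norm{\mathbb{P}(\Delta\vec{u})}_0=\norm{\mathbb{P}\vec{f}}_0\leq\norm{\vec{f}}_0$), and finishing with the triangle inequality and the infimum over $q_h$ yields the stated pressure bound.

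The main obstacle is keeping clear track of where pressure-robustness actually applies. In the velocity step it is essential that the test function lives in $\vec{X}_h^0$, so that $I_h\vec{w}_h$ is truly divergence-free and the irrotational part $\nabla\phi$ of the data drops out; only then can $\norm{\vec{f}}_0$ be replaced by the smaller $\norm{\mathbb{P}\vec{f}}_0$, equivalently $\nu\norm{\mathbb{P}(\Delta\vec{u})}_0$. In the pressure step, by contrast, the inf-sup test function runs over the full space $\vec{X}_h$, on which $I_h\vec{v}_h$ is no longer divergence-free, so the non-pressure-robust contribution $h\norm{\vec{f}}_0/\tilde\beta$ is unavoidable and genuinely appears in the bound.
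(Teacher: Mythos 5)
Your proposal is correct and takes essentially the same route as the paper: a Strang-type argument in $\vec{X}_h^0$ using $\nabla\cdot I_h\vec{w}_h=0$ to discard $\nabla\phi$, \Cref{lem:consistency} for the consistency part, \eqref{eq:as:reconstruction_operator_interpolation} for the reconstruction part, and the inf-sup bound with the same error equation for the pressure. The only cosmetic differences are that the paper takes $\vec{v}_h$ to be the best approximation and uses the Pythagorean identity (so the term $a_h(\vec{u}-\vec{v}_h,\vec{w}_h)$ vanishes) and fixes $q_h=\pi_h p$ in the pressure step, whereas you use the triangle inequality and an arbitrary $q_h$; the resulting extra $\tilde{\beta}^{-1}\inf_{q_h}\norm{p-q_h}_0$ term in your version is absorbed into $\tilde{\beta}^{-1}h\norm{\vec{f}}_0$ via \Cref{lem:approximation}.
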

		\begin{proof}
			Let $\vec{v}_h\in \vec{X}_h^0$ be the best-approximation of $\vec{u}$ with respect to $\norm{\cdot}_{1,h}$ and set $\vec{w}_h=\vec{u}_h-\vec{v}_h\in\vec{X}_h^0$.
			Then due to the Pythagorean theorem we have
			\begin{equation}\label{eq:main_result_pythagoras}
				\norm{\vec{u}-\vec{u}_h}_{1,h}^2 = \norm{\vec{u}-\vec{v}_h}_{1,h}^2 + \norm{\vec{w}_h}_{1,h}^2.
			\end{equation}
			Using \eqref{eq:stokesdiscrete_elliptic} and $a_h(\vec{u}-\vec{v}_h,\vec{w}_h)=0$ we can estimate
			\begin{align*}
				\norm{\vec{w}_h}_{1,h}^2 &= a_h(\vec{w}_h, \vec{w}_h) = a_h(\vec{u}_h-\vec{v}_h,\vec{w}_h)=a_h(\vec{u}-\vec{v}_h, \vec{w}_h) - a_h(\vec{u},\vec{w}_h) + a_h(\vec{u}_h,\vec{w}_h)\\
				&\leq \abs{a_h(\vec{u},\vec{w}_h) - \nu^{-1}(\vec{f},I_h\vec{w}_h)}.
			\end{align*}
			Dividing by $\norm{\vec{w}_h}_{1,h}$ and combining this inequality with \eqref{eq:main_result_pythagoras} yields
			\begin{equation}\label{eq:main_result_proof_0}
				\norm{\vec{u}-\vec{u}_h}_{1,h} \leq \norm{\vec{u}-\vec{v}_h}_{1,h} +  \frac{ \abs{a_h(\vec{u}, \vec{w}_h) - \nu^{-1} (\vec{f},I_h\vec{w}_h)}}{\norm{\vec{w}_h}_{1,h}}.
			\end{equation}
			Recall the Helmholtz--Hodge decomposition of the data $\vec{f} = \mathbb{P} \vec{f} + \nabla \phi$ and note that $\nabla\cdot I_h\vec{w}_h = 0$ due to \Cref{lem:reconstruction_operator} and $\vec{w}_h\in\vec{X}_h^0$.
			With $(\nabla\phi,I_h\vec{w}_h) = 0$ we get
			\begin{align}
				\abs{a_h(\vec{u},\vec{w}_h) - \frac{1}{\nu}(\vec{f},I_h\vec{w}_h)} &= \abs{a_h(\vec{u},\vec{w}_h) - \nu^{-1}(\mathbb{P}\vec{f},I_h\vec{w}_h)} \nonumber\\
				&= \abs{a_h(\vec{u},\vec{w}_h) - \nu^{-1}(\mathbb{P} \vec{f},\vec{w}_h) + \nu^{-1}(\mathbb{P} \vec{f},\vec{w}_h - I_h\vec{w}_h)} \nonumber\\
				&\leq  \abs{a_h(\vec{u},\vec{w}_h) - \nu^{-1}(\mathbb{P} \vec{f},\vec{w}_h)} + \abs{\nu^{-1}(\mathbb{P} \vec{f},\vec{w}_h - I_h\vec{w}_h)}. \label{eq:main_result_proof_1}
			\end{align}
			By \cite[Lemma 2]{ApelKempf2021}, $\vec{u}$ is also the velocity solution of the Stokes problem with unit viscosity and right hand side $\nu^{-1}\mathbb{P}\vec{f}$, which means that we can apply the consistency estimate of \Cref{lem:consistency}, which yields
			\begin{align}
				\abs{a_h(\vec{u},\vec{w}_h) - \nu^{-1}(\mathbb{P} \vec{f},\vec{w}_h)} &\lesssim \nu^{-1} h \norm{\vec{w}_h}_{1,h} \norm{\mathbb{P}\vec{f}}_0. \label{eq:main_result_proof_2}
			\end{align}
			The second term in \eqref{eq:main_result_proof_1} can be estimated using the Cauchy--Schwarz inequality and the interpolation error estimate for the reconstruction operator $I_h$ from \Cref{lem:reconstruction_operator}, which gets us
			\begin{equation}\label{eq:main_result_proof_3}
			\abs{\nu^{-1}(\mathbb{P} \vec{f},\vec{w}_h - I_h\vec{w}_h)} \leq \nu^{-1}\norm{\mathbb{P}\vec{f}}_0 \norm{\vec{w}_h - I_h\vec{w}_h}_0 \lesssim \nu^{-1} h \norm{\mathbb{P}\vec{f}}_0 \norm{\vec{w}_h}_{1,h}.
			\end{equation}
			We can now combine the individual estimates \eqref{eq:main_result_proof_2}, \eqref{eq:main_result_proof_3} with \eqref{eq:main_result_proof_1} and insert the result in \eqref{eq:main_result_proof_0}.
			Since $\vec{v}_h$ was chosen as the best-approximation of $\vec{u}$ in $\vec{X}_h^0$, we now have the final estimate
			\begin{equation*}
			\norm{\vec{u}-\vec{u}_h}_{1,h} \lesssim \inf_{\vec{v}_h\in\vec{X}_h^0}\norm{\vec{u}-\vec{v}_h}_{1,h} + h \norm{\mathbb{P}(\Delta \vec{u})}_0,
			\end{equation*}
			where we also used the identity \cite[Equation (9)]{ApelKempf2021}.
			
			To get the pressure estimate we also use the Pythagorean theorem to get
			\begin{equation*}
				\norm{p-p_h}_0^2 = \norm{p-\pi_h p}_0^2 + \norm{\pi_h p - p_h}_0^2,
			\end{equation*}
			where $\pi_h: L_0^2(\Omega) \to Q_h$ is the $L^2$-projection into the discrete pressure space.
			For the first term it holds $\norm{p-\pi_h p}_0^2 = \inf_{q_h\in Q_h}\norm{p-q_h}_0^2$.
			Since $\pi_h p-p_h \in Q_h$ and using the discrete inf-sup condition \eqref{eq:discrete_inf_sup} we get
			\begin{align}
				\norm{\pi_h p - p_h}_0 &\leq \frac{1}{\tilde{\beta}} \sup_{\vec{v}_h\in \vec{X}_h} \frac{b_h(\vec{v}_h,\pi_h p - p_h)}{\norm{\vec{v}_h}_{1,h}} = \frac{1}{\tilde{\beta}} \sup_{\vec{v}_h\in \vec{X}_h} \frac{b_h(\vec{v}_h,\pi_h p - p) + b_h(\vec{v}_h,p-p_h)}{\norm{\vec{v}_h}_{1,h}}.\label{eq:proof_p_estimate_1}
			\end{align}
			The first term in the numerator can be estimated using the Cauchy--Schwarz inequality, the error estimate for the $L^2$-projection into piecewise constant functions from \cite[Theorem 1.103]{ErnGuermond2004} which yields
			\begin{align}
				\abs{b_h(\vec{v}_h,\pi_hp - p)} \leq \norm{\nabla_h\cdot\vec{v}_h}_0 \norm{\pi_h p -p}_0 \lesssim \norm{\vec{v}_h}_{1,h} \norm{\pi_h p -p}_0 \lesssim h \norm{\vec{v}_h}_{1,h} \norm{\vec{f}}_0.\label{eq:proof_p_estimate_2}
			\end{align}
			Since $p_h$ solves the discrete problem we get for the second term
			\begin{align}
				\abs{b_h(\vec{v}_h,p-p_h)} &= \abs{b_h(\vec{v}_h,p) + \nu a_h(\vec{u}_h, \vec{v}_h) - (\vec{f}, I_h\vec{v}_h)} \nonumber\\
				&= \abs{\nu a_h(\vec{u},\vec{v}_h) + b_h(\vec{v}_h, p) - (\vec{f}, \vec{v}_h) + \nu a_h(\vec{u}_h-\vec{u},\vec{v}_h) + (\vec{f}, \vec{v}_h - I_h\vec{v}_h)} \nonumber\\
				&\lesssim \nu \norm{\vec{u}-\vec{u}_h}_{1,h}\norm{\vec{v}_h}_{1,h} + h \norm{\vec{f}}_0\norm{\vec{v}_h}_{1,h}, \label{eq:proof_p_estimate_3}
			\end{align}
			where in the last step the consistency of the method, the Cauchy--Schwarz inequality and the interpolation error estimate from \Cref{lem:reconstruction_operator} was used.
			Now putting \eqref{eq:proof_p_estimate_2} and \eqref{eq:proof_p_estimate_3} into \eqref{eq:proof_p_estimate_1} and using the estimate for the velocity error yields the claimed pressure estimate.
		\end{proof}
	
		\begin{corollary}
			Under the assumptions from \Cref{th:main_result} we have the estimates
			\begin{align*}
				\norm{\vec{u}-\vec{u}_h}_{1,h} &\lesssim h \norm{\mathbb{P}(\Delta \vec{u})}_0, & \norm{p-p_h}_0 &\lesssim h \tilde{\beta}^{-1} \norm{\vec{f}}_0,
			\end{align*}
		\end{corollary}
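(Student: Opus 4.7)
The plan is to obtain the corollary as an essentially immediate consequence of \Cref{th:main_result} by substituting the approximation bounds from \Cref{lem:approximation}. The only nontrivial point is handling the factor $\nu$ that appears in front of the velocity best-approximation term in the pressure estimate of \Cref{th:main_result}; this can be eliminated via the Helmholtz--Hodge projected momentum identity, which yields $\nu\mathbb{P}(\Delta\vec{u}) = -\mathbb{P}\vec{f}$ (this is exactly \cite[Equation (9)]{ApelKempf2021}, already invoked in the preceding proofs).

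For the velocity estimate I would start from the first bound of \Cref{th:main_result} and simply insert the first estimate of \Cref{lem:approximation}, namely $\inf_{\vec{v}_h\in\vec{X}_h^0}\norm{\vec{u}-\vec{v}_h}_{1,h}\lesssim h\norm{\mathbb{P}(\Delta\vec{u})}_0$. The two terms on the right-hand side then have the same form and are combined into $h\norm{\mathbb{P}(\Delta\vec{u})}_0$ (up to the implicit constant).

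For the pressure estimate I would start from the second bound of \Cref{th:main_result} and substitute both approximation estimates of \Cref{lem:approximation}, giving
\begin{equation*}
    \norm{p-p_h}_0 \lesssim h\norm{\vec{f}}_0 + \frac{\nu h}{\tilde{\beta}}\norm{\mathbb{P}(\Delta\vec{u})}_0 + \frac{h}{\tilde{\beta}}\norm{\vec{f}}_0.
\end{equation*}
Using $\nu\norm{\mathbb{P}(\Delta\vec{u})}_0 = \norm{\mathbb{P}\vec{f}}_0 \leq \norm{\vec{f}}_0$ from the projected momentum equation, the middle term is absorbed into the last one, and since $\tilde{\beta}\leq 1$ (it being an inf-sup constant on a subspace) the leading $h\norm{\vec{f}}_0$ term is dominated by $h\tilde{\beta}^{-1}\norm{\vec{f}}_0$, producing the claimed bound.

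I do not anticipate any real obstacle: the corollary is a bookkeeping step assembling the main theorem with the approximation lemma. The single technical point worth flagging explicitly in the write-up is the use of the Helmholtz--Hodge identity to trade $\nu\norm{\mathbb{P}(\Delta\vec{u})}_0$ for $\norm{\mathbb{P}\vec{f}}_0$, which is what allows the pressure estimate to be stated purely in terms of $\norm{\vec{f}}_0$ and keeps all constants independent of the viscosity $\nu$.
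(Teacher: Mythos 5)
Your proposal is correct and follows essentially the same route as the paper, whose proof consists of the single sentence that the corollary is a direct application of \Cref{lem:approximation} to \Cref{th:main_result}. You simply spell out the bookkeeping the paper leaves implicit, in particular the use of $\nu\mathbb{P}(\Delta\vec{u})=-\mathbb{P}\vec{f}$ to absorb the $\nu$-weighted term, which is exactly the intended argument.
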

		\begin{proof}
			This is a direct application of \Cref{lem:approximation} to the estimates from \Cref{th:main_result}.
		\end{proof}
			
	\section{Numerical example}
		We now present an academic numerical example to see the performance of the method on anisotropic meshes.
		The example employs a manufactured solution of the Stokes equations on the unit square $\Omega = (0,1)^2$ described by the velocity and pressure functions
		\begin{equation*}
			\vec{u}(\vec{x}) = \begin{pmatrix}\tanh\left(\frac{y}{\sqrt{\varepsilon}}\right) \\ 0 \end{pmatrix}, \qquad p(\vec{x}) = \tanh\left(\frac{y}{\sqrt{\varepsilon}}\right) - C(\varepsilon),
		\end{equation*}
		with a positive parameter $\varepsilon$.
		Both functions exhibit a boundary layer near $y = 0$, as can be seen in the visualization in \Cref{fig:boundary_layer}.
		\begin{figure}[t]
			\centering
			\includegraphics{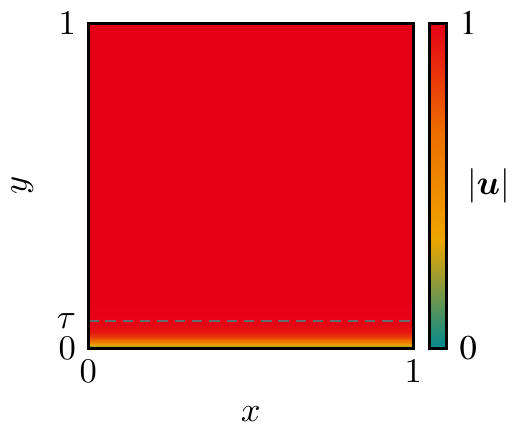}\hfill
			\includegraphics{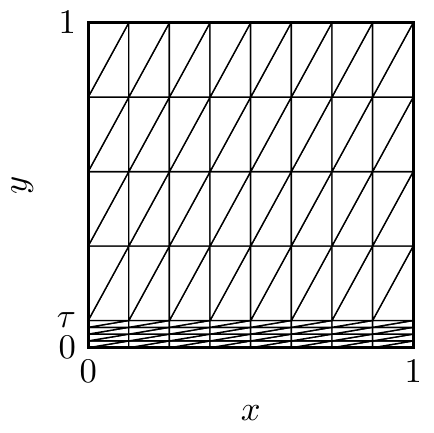}
			\caption{Left: Magnitude of velocity solution for $\varepsilon=10^{-3}$. Right: Shishkin-type mesh.}
			\label{fig:boundary_layer}
		\end{figure}
		\begin{figure}[t]
			\centering
			\includegraphics{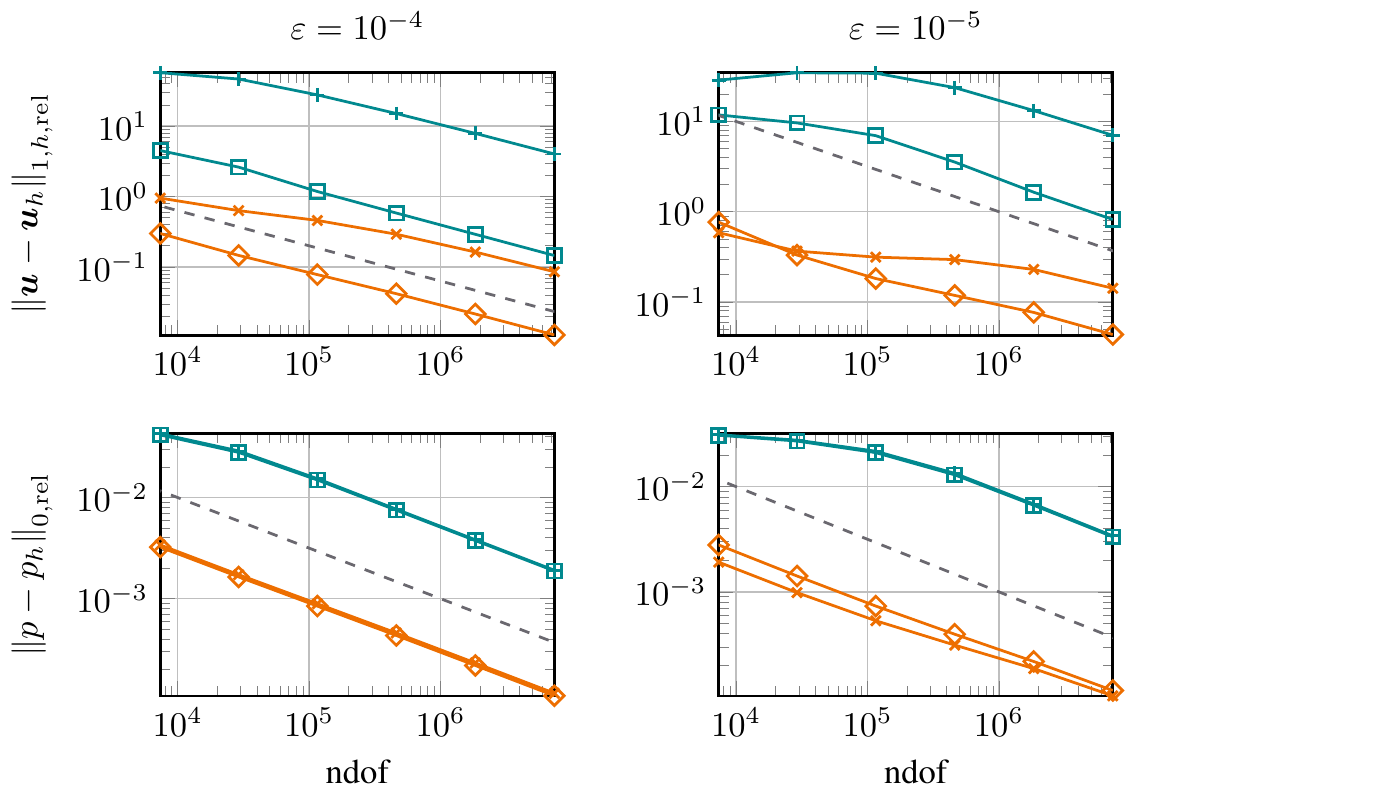}

			\includegraphics{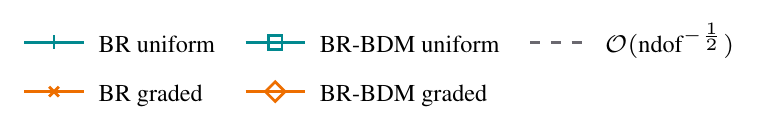}
			\caption{Convergence plots for the boundary layer example for $\varepsilon\in\{10^{-4},10^{-5}\}$, $\nu=10^{-4}$, with BR and BR-BDM methods.\\}
			\label{fig:boundary_layer_convergence_BR}
		\end{figure}
	
		The functions can be viewed as a fluid flow along a wall with no-slip boundary condition.
		The parameter $\varepsilon$ can be used to adjust the width of the boundary layer.
		Defining the boundary layer width as the distance from the wall where $99 \%$ of the free flow velocity is reached, we compute
		\begin{align*}
			\abs{\vec{u}(\cdot,\tau)} &= \tanh\left(\frac{\tau}{\sqrt{\varepsilon}}\right) = 0.99 &	\Leftrightarrow \qquad\qquad \tau &= 0.5\sqrt{\varepsilon}\ln\left(199\right)\approx 2.65\sqrt{\varepsilon}
		\end{align*}
		for the transition point parameter $\tau$ of the Shishkin-type meshes we want to use. 
		This type of mesh has a uniform element size in $x$-direction and half of the total elements up to $\tau$ in the $y$-direction, see the bottom illustration in \Cref{fig:boundary_layer}.
		The constant $C(\varepsilon)$ is needed to set the mean pressure to zero and can be computed by 
		\begin{equation*}
			C(\varepsilon) = \int_\Omega \tanh\left(\frac{y}{\sqrt{\varepsilon}}\right) \dd{\vec{x}} = \sqrt{\varepsilon} \ln (\cosh(\varepsilon^{-\frac{1}{2}})).
		\end{equation*}
	
		Computations were performed with the BR and BR-BDM methods for parameter choices $\varepsilon\in\{10^{-4},10^{-5}\}$ and $\nu=10^{-4}$ on uniform and Shishkin-type meshes.
		For the presentation of the numerical results we use the relative errors
		\begin{align*}
			&\norm{\vec{u}-\vec{u}_h}_{1,h,\mathrm{rel}} = \frac{\norm{\vec{u}-\vec{u}_h}_{1,h}}{\norm{\vec{u}}_{1,h}}, && \norm{p-p_h}_{0,\mathrm{rel}} = \frac{\norm{p-p_h}_{0}}{\norm{p}_0}.
		\end{align*}
		
		The results are shown in \Cref{fig:boundary_layer_convergence_BR}.
		The plots show on the one hand the clear advantage of the pressure-robust methods, where the velocity errors are significantly smaller than for the standard method.
		On the other hand, the effect of the anisotropic mesh grading is obvious in the velocity errors as well as the pressure errors.
	
	\printbibliography
	
\end{document}